\DeclareMathOperator{\fac}{Fac}
\DeclareMathOperator{\card}{Card}
\author{J. Cassaigne \and S. Labbé \and J. Leroy\thanks{J. Leroy is FNRS post-doctoral fellow.}}
\title{A Set of Sequences of Complexity $2n+1$}
\institute{
Institut de mathématiques de Marseille, CNRS UMR 7373, Campus de Luminy,
Case 907, 13288 Marseille Cedex 09, France\\
\email{julien.cassaigne@math.cnrs.fr}\\
CNRS, LaBRI, UMR 5800, F-33400 Talence, France\\
\email{sebastien.labbe@labri.fr}\\
Universit\'e de Li\`ege, Institut de math\'ematique, \\
All\'ee de la d\'ecouverte 12 (B37), 4000 Li\`ege, Belgium\\
\email{j.leroy@ulg.ac.be}
}
\begin{document}

\maketitle
\setcounter{footnote}{0}

\newcommand{\N}{\mathbb{N}}
\newcommand{\Z}{\mathbb{Z}}
\newcommand{\Q}{\mathbb{Q}}
\newcommand{\R}{\mathbb{R}}
\newcommand{\bx}{\mathbf{x}}
\newcommand{\A}{\mathcal{A}}
\newcommand{\MONE}{{\arraycolsep=2pt\left(\begin{array}{rrr}
1 & 1 & 0 \\
0 & 0 & 1 \\
0 & 1 & 0
\end{array}\right)}}
\newcommand{\MTWO}{{\arraycolsep=2pt\left(\begin{array}{rrr}
0 & 1 & 0 \\
1 & 0 & 0 \\
0 & 1 & 1
\end{array}\right)}}

\def\bw{\mathbf{w}}
\def\bs{\mathbf{s}}
\def\ba{\mathbf{a}}

\begin{abstract}
We prove the existence of a ternary sequence of factor complexity $2n+1$ for
any given vector of rationally independent letter frequencies. Such sequences
are constructed from an infinite product of two substitutions according to a
particular Multidimensional Continued Fraction algorithm. We show that this
algorithm is conjugate to a well-known one, the Selmer algorithm. 
Experimentations (Baldwin, 1992) suggest that their second Lyapunov exponent is
negative which presages finite balance properties.

\keywords{Substitutions, factor complexity, Selmer, continued fraction, bispecial.}
\end{abstract}

\section{Introduction}

Words of complexity $2n+1$ were considered in \cite{MR1116845}
with the condition that there is exactly one left and one right special factor of
each length. These words are called Arnoux-Rauzy sequences and are a generalization
of Sturmian sequences on a ternary alphabet.
It is known that the frequencies
of any Arnoux-Rauzy word are well defined and belong to the Rauzy Gasket
\cite{MR3184185}, a fractal set of Lebesgue measure zero.
Thus the above condition on the number of special factors is very restrictive for the possible letter frequencies.

Sequences of complexity $p(n)\leq 2n+1$ include Arnoux-Rauzy words, 
codings of interval exchange transformations and more \cite{MR3214265}. 
For any given letter frequencies one can construct sequences of factor
complexity $2n+1$ by the coding of a 3-interval exchange transformation. It
is known that these sequences are unbalanced \cite{MR1488330}.
Thus the question of finding balanced ternary sequences of factor complexity
$2n+1$ for all letter frequencies remains. This article intends to give a positive answer to this question for almost all vectors of letter frequencies (with respect to Lebesgue measure).

In recent years, multidimensional continued fraction algorithms were used to
obtain ternary balanced sequences with low factor complexity for any given
letter frequency vector.  Indeed the Brun algorithm leads to balanced sequences
\cite{MR3124516} and it was shown that the Arnoux-Rauzy-Poincaré
algorithm leads to sequences of factor complexity $p(n)\leq\frac{5}{2}n+1$
\cite{MR3283831}.

In 2015, the first author introduced a new Multidimensional Continued Fraction
algorithm \cite{cassaigne_algorithme_2015} based on the study of Rauzy graphs.
In this work, we formalize the algorithm,
its matrices, substitutions and associated cocycles and $S$-adic words. 
We show that $S$-adic words obtained from these substitutions
have complexity $2n+1$. We also show that the algorithm is conjugate to the
Selmer algorithm, a well-known Multidimensional Continued Fraction algorithm.
We believe that almost all sequences generated by the algorithm are balanced.

\section{A Bidimensional Continued Fraction Algorithm}
On $\Lambda= \mathbb{R}^3_{\geq 0}$, the bidimensional continued fraction algorithm
introduced by the first author~\cite{cassaigne_algorithme_2015} is
\[
F_C (x_1,x_2,x_3) = 
\begin{cases}
    (x_1-x_3, x_3, x_2), & \mbox{if } x_1 \geq x_3;\\
    (x_2, x_1, x_3-x_1), & \mbox{if } x_1 < x_3.
\end{cases}
\]
More information on Multidimensional Continued Fraction Algorithms
can be found in \cite{BRENTJES,schweiger}.

\subsection{The Matrices}
Alternatively, the map $F_C$ can be defined by associating nonnegative matrices
to each part of a partition of $\Lambda$ into $\Lambda_1\cup\Lambda_2$ where
\begin{align*}
	\Lambda_1 &= \{(x_1,x_2,x_3)\in\Lambda\mid 
	x_1 \geq x_3\}, \\
    \Lambda_2 &= \{(x_1,x_2,x_3)\in\Lambda\mid 
	x_1 < x_3\}.
\end{align*}
The matrices are given by the rule
$M(\bx) = C_i$
if and only if
$\bx\in\Lambda_i$ where
\[
C_{1}=\MONE
\qquad\text{and}\qquad
C_{2}=\MTWO.
\]
The map $F_C$ on $\Lambda$ and
the projective map $f_C$ on
$\Delta=\{\bx\in\Lambda\mid\Vert\bx\Vert_1=1\}$ are then defined as:
\[
    F_C(\bx) = M(\bx)^{-1}\bx
    \qquad\text{and}\qquad
    f_C(\bx) = \frac{F_C(\bx)}{\Vert F_C(\bx)\Vert_1}.
\]
Many of its properties can be found in \cite{labbe_3-dimensional_2015} and the
density function of the invariant measure of $f_C$ was computed in
\cite{arnoux_labbe_2017}.

\subsection{The Cocycle}
The algorithm $F_C$ defines a \emph{cocycle} $M_n:\Lambda\to SL(3,\Z)$ by
\[
    M_0(\bx) = I 
    \quad
    \text{and}
    \quad
    M_n(\bx) = 
    M(\bx)
    M(F_C\bx)
    M(F_C^2\bx)\cdots
    M(F_C^{n-1}\bx)
\]
satisfying the cocycle property $M_{n+m}(\bx) = M_n(\bx)\cdot M_m(F_C^n\bx)$.

For example starting with $\bx=(1,e,\pi)^T$, the first iterates (approximate to
the nearest hundredth) under $F_C$ are
\[
\left(\begin{array}{r} 1.00\\ 2.72\\ 3.14\end{array}\right)
\xrightarrow{F_C}
\left(\begin{array}{r} 2.72\\ 1.00\\ 2.14\end{array}\right)
\xrightarrow{F_C}
\left(\begin{array}{r} 0.58\\ 2.14\\ 1.00\end{array}\right)
\xrightarrow{F_C}
\left(\begin{array}{r} 2.14\\ 0.58\\ 0.42\end{array}\right)
\xrightarrow{F_C}
\left(\begin{array}{r} 1.72\\ 0.42\\ 0.58\end{array}\right)
\xrightarrow{F_C}
\left(\begin{array}{r} 1.14\\ 0.58\\ 0.42\end{array}\right)
\]
The associated cocycle at $\bx=(1,e,\pi)^T$ when $n=5$ is
\begin{align*}
    M_5(\bx) 
    &= M(\bx) M(F_C\bx) M(F_C^2\bx) M(F_C^3\bx) M(F_C^4\bx) \\
    &= C_{2} C_{1} C_{2} C_{1} C_{1} \\
    &= \MTWO \MONE \MTWO \MONE \MONE 
    = {\arraycolsep=2pt\left(\begin{array}{rrr}
	0 & 1 & 1 \\
	1 & 2 & 1 \\
	1 & 2 & 2
	\end{array}\right)}.
\end{align*}

\subsection{The Substitutions}

Let $\A=\{1,2,3\}$.
The substitutions on $\A^*$ are given by the rule
$\sigma(\bx) = c_i$
if and only if
$\bx\in\Lambda_i$ for $i=1,2$ where
\[
c_{1}=\left\{\begin{array}{l}
1 \mapsto 1\\
2 \mapsto 13\\
3 \mapsto 2
\end{array}\right.
\qquad\text{and}\qquad
c_{2}=\left\{\begin{array}{l}
1 \mapsto 2\\
2 \mapsto 13\\
3 \mapsto 3
\end{array}\right.
\]
One may check that $C_i$ is the incidence matrix of $c_i$ for $i=1,2$.
For any word $w\in\A^*$, we denote $\overrightarrow{w}=(|w|_1,|w|_2,|w|_3)\in\N^3$
where $|w|_i$ means the number of occurrences of the letter $i$ in $w$.
Therefore, for all $\bx\in\Lambda$, $\sigma(\bx):\A^*\to\A^*$ is a 
monoid morphism such that its incidence matrix is $M(\bx)$, i.e.,
$\overrightarrow{\sigma(\bx)(w)} = M(\bx)\cdot\overrightarrow{w}$.

\subsection{$S$-adic Words}

Let $S$ be a set of morphisms.
A word $\bw$ is said to be {\em $S$-adic} if there is a sequence $\bs=(\tau_n:A_{n+1}^* \to A_n^*)_{n \in \N} \in S^\N$ and a sequence $\ba = (a_n) \in \prod_{n\in\N}A_n$ such that $\bw = \lim_{n \to +\infty} \tau_0 \tau_1 \cdots \tau_{n-1}(a_{n})$.
The pair $(\bs,\ba)$ is called an {\em $S$-adic representation} of $\bw$ and the sequence $\bs$ a {\em directive sequence} of $\bw$.
The $S$-adic representation is said to be {\em primitive} whenever the directive sequence $\bs$ is primitive, i.e., for all $r \geq 0$, there exists $r'>r$ such that all letters of $A_r$ occur in all images $\tau_r \tau_{r+1} \cdots \tau_{r'-1}(a)$, $a \in A_{r'}$.
Observe that if $\bw$ has a primitive $S$-adic representation, then $\bw$ is uniformly recurrent.
For all $n$, we set $\bw^{(n)} = \lim_{m \to +\infty} \tau_n \tau_{n+1} \cdots \tau_{m-1}(a_{m})$.

\subsection{$S$-adic Words Associated with the Algorithm $F_C$}

The algorithm $F_C$ defines the function $\sigma_n:\Lambda\to \mathrm{End}(\A^*)$, 
$    \sigma_n(\bx) = 
    \sigma(F_C^n\bx)
$
When the sequence $(\sigma_n(\bx))_{n \in \mathbb{N}}$ contains infinitely many occurrences of $c_1$ and $c_2$, this defines a $\mathcal{C}$-adic word, $\mathcal{C} = \{c_1,c_2\}$,
\[
    W(\bx) = \lim_{n\to\infty} \sigma_0(\bx) \sigma_1(\bx) \cdots \sigma_n(\bx) (1).
\]
Indeed, let $w_n = \sigma_0(\mathbf x) \cdots \sigma_n(\mathbf x)(1)$.
As $c_1$ and $c_2$ occur infinitely often, there exist infinitely many indices $m$ such that $\sigma_{m+1}(\mathbf x) = c_1$ and $\sigma_{m+2}(\mathbf x) = c_2$. 
For all $n\geq m+2$, let $z=\sigma_{m+3}(\mathbf x) \cdots \sigma_n(\mathbf x)(1)$.
Since $\{1,2\}\cal A^*$ is stable under both $c_1$ and $c_2$, we have
$z \in \{1,2\}\cal A^*$, so that $c_1c_2(z) \in \{13,12\}\cal A^*$.
Then $1$ is a proper prefix of $c_1c_2(z)=\sigma_{m+1}(\mathbf x) \cdots \sigma_n(\mathbf x)(1)$,
and therefore $w_m$ is a proper prefix of $w_n$.
It follows that the limit of $(w_n)$ exists.


For example, using vector $\bx=\left(1, e, \pi\right)^T$, we have
\[
\sigma(\bx) \sigma(F_C\bx) \sigma(F_C^2\bx) \sigma(F_C^3\bx) \sigma(F_C^4\bx)
    = c_{2} c_{1} c_{2} c_{1} c_{1} \\
    = \begin{cases}
	1 \mapsto 23\\
	2 \mapsto 23213\\
	3 \mapsto 2313
    \end{cases},
\]
whose incidence matrix is $M_5(\bx)$. 
The associated infinite $\mathcal{C}$-adic word is
\[
    W(\bx) 
    = 2323213232323132323213232321323231323232 \cdots.
\]

%

\begin{lemma}
Let $\mathbf x \in \Delta$.
The following conditions are equivalent.
\begin{enumerate}
\item[(i)] the entries of $\mathbf x$ are rationally independent,
\item[(ii)] the directive sequence of $W(\mathbf x)$ is primitive,
\item[(iii)] the directive sequence of $W(\mathbf x)$ does not belong
to $\mathcal{C}^* \{c_1^2, c_2^2\}^\omega$.
\end{enumerate}
Furthermore, the vector of letter frequencies of $1$, $2$ and $3$ in $W(\bx)$ is $\bx$.
\end{lemma}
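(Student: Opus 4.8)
The plan is to establish the two equivalences (i)$\Leftrightarrow$(iii) (arithmetic) and (ii)$\Leftrightarrow$(iii) (combinatorial) separately, and then to read off the frequency statement from the contraction of the cocycle $M_n(\mathbf{x})$. Throughout I assume $W(\mathbf{x})$ is defined, i.e. both $c_1$ and $c_2$ occur infinitely often in the directive sequence; the remaining case (a tail $c_1^\omega$ or $c_2^\omega$) is degenerate and leaves $W(\mathbf{x})$ undefined.

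\emph{Combinatorial core, (ii)$\Leftrightarrow$(iii).} The decisive computation is that $c_1^2\colon 1\mapsto1,\,2\mapsto12,\,3\mapsto13$ and $c_2^2\colon 1\mapsto13,\,2\mapsto23,\,3\mapsto3$ both leave the sub-alphabet $\{1,3\}$ stable (each sends $\{1,3\}$ into $\{1,3\}^*$), whereas a single $c_1$ sends $3\mapsto2$ and a single $c_2$ sends $1\mapsto2$, so $\{1,3\}$ is destroyed by any run of odd length. For $\lnot$(iii)$\Rightarrow\lnot$(ii): if from some index $N$ the sequence is a concatenation of the blocks $c_1^2$ and $c_2^2$, then for every $r'>N$ at least one of the letters $1,3$ has image under $\tau_N\cdots\tau_{r'-1}$ contained in $\{1,3\}^*$ — checking the three cases (even offset, take $a=1$; odd offset ending a $c_1^2$ block, where $c_1(1)=1$ makes $a=1$ safe; odd offset ending a $c_2^2$ block, where $c_2(3)=3$ makes $a=3$ safe) — so letter $2$ never appears in that image and no admissible $r'$ exists for $r=N$, contradicting primitivity. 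For the converse $(\text{iii})\Rightarrow(\text{ii})$ I would argue that $\{1,3\}$ is the only proper sub-alphabet a tail containing both $c_1$ and $c_2$ could preserve, so each odd run leaks the $\{1,3\}$-structure into the letter $2$; accumulating the mixing produced by the infinitely many odd runs yields, for every $r$, an $r'$ with $\tau_r\cdots\tau_{r'-1}$ having a strictly positive incidence matrix (for instance $(C_1C_2)^3$ is already strictly positive), which is exactly primitivity.

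\emph{Arithmetic core, (i)$\Leftrightarrow$(iii).} I reduce (iii) to the orbit $(F_C^n\mathbf{x})$ meeting $\partial\Lambda$. Pairing steps, two consecutive $c_1$ realise $(x_1,x_2,x_3)\mapsto(x_1-x_2-x_3,x_2,x_3)$ (valid when $x_1\ge x_2+x_3$) and two consecutive $c_2$ realise $(x_1,x_2,x_3)\mapsto(x_1,x_2,x_3-x_1-x_2)$ (valid when $x_3>x_1+x_2$); both fix $x_2$. In the coordinates $P=x_1+x_2,\ Q=x_3+x_2$ these blocks are exactly the subtractive Euclidean steps $(P,Q)\mapsto(P-Q,Q)$ and $(P,Q)\mapsto(P,Q-P)$, and a double step is impossible (forcing an odd run) precisely when $|P-Q|=|x_1-x_3|<x_2$. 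Since $P,Q\ge x_2$ while each block decreases $P+Q$ by $\min(P,Q)\ge x_2$, only finitely many consecutive blocks are possible unless the invariant coordinate $x_2$ of the current orbit point is $0$. Hence the directive sequence is eventually in $\mathcal{C}^*\{c_1^2,c_2^2\}^\omega$ iff some $F_C^N\mathbf{x}$ has a vanishing coordinate (a short computation carries a zero in position $1$ or $3$ to position $2$ within one step, leaving pure Euclid on the other two coordinates). As $M_N(\mathbf{x})^{-1}$ is an integer matrix, such a zero coordinate of $F_C^N\mathbf{x}=M_N(\mathbf{x})^{-1}\mathbf{x}$ is an integer relation among $x_1,x_2,x_3$; this proves $\lnot(\text{iii})\Rightarrow\lnot(\text{i})$.

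\emph{The hard direction and the obstacle.} For $\lnot(\text{i})\Rightarrow\lnot(\text{iii})$ I must show that a relation $\langle\mathbf{a},\mathbf{x}\rangle=0$ with $\mathbf{a}\in\Z^3\setminus\{0\}$ forces the orbit to the boundary. Transporting it by the cocycle gives $\langle\mathbf{a}_n,F_C^n\mathbf{x}\rangle=0$ with $\mathbf{a}_n=M_n(\mathbf{x})^T\mathbf{a}\in\Z^3$, so as long as the orbit stays in the open cone the integer covector $\mathbf{a}_n$ keeps mixed signs; I would then run a descent on $\mathbf{a}_n$ — or invoke the announced conjugacy with the Selmer algorithm and its finiteness on rationally dependent rays — to force a coordinate of $F_C^n\mathbf{x}$ to vanish after finitely many steps. \textbf{This is the main obstacle}: excluding that the orbit wanders forever in the interior while staying orthogonal to a fixed nonzero integer covector. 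Once the boundary is reached, the previous paragraph identifies the tail as $\{c_1^2,c_2^2\}^\omega$, giving $(\text{iii})\Rightarrow(\text{i})$. Finally, for the frequency claim, from $\mathbf{x}=M_n(\mathbf{x})\,F_C^n\mathbf{x}\in M_n(\mathbf{x})\Lambda$ we get $\mathbf{x}\in\bigcap_n M_n(\mathbf{x})\Lambda$; under primitivity a standard Hilbert-metric argument shows these nested cones contract to the single ray $\R_{\ge0}\mathbf{x}$, so the column directions of $M_n(\mathbf{x})$, which record the abelianisations of $\sigma_0(\mathbf{x})\cdots\sigma_{n-1}(\mathbf{x})$, converge to $\mathbf{x}$; since $W(\mathbf{x})$ is uniformly recurrent the letter frequencies exist and equal $\mathbf{x}$.
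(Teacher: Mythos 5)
Your proposal follows the paper's own strategy closely on three of the four fronts: the stability of the sub-alphabet $\{1,3\}$ under $c_1^2$ and $c_2^2$ for $\lnot(\mathrm{iii})\Rightarrow\lnot(\mathrm{ii})$; positivity of products containing infinitely many odd runs for the converse (the paper makes this precise by exhibiting the explicit family $c_1c_2^{2i+1}c_1^jc_2^kc_1^lc_2^m$ and its mirror, all of whose matrices are positive — your appeal to $(C_1C_2)^3>0$ does not by itself cover an arbitrary tail with infinitely many odd runs, so this step needs the same kind of explicit verification); and, for $\lnot(\mathrm{iii})\Rightarrow\lnot(\mathrm{i})$, the invariance of $x_2$ under the paired steps together with the decrease of $x_1+x_3$ by at least $x_2$ per block, which is exactly the paper's computation with $C_1^{-2}$ and $C_2^{-2}$ rewritten in your $(P,Q)$ coordinates. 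These parts are essentially correct.

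The genuine gap is the one you flag yourself: the implication $\lnot(\mathrm{i})\Rightarrow\lnot(\mathrm{iii})$. You correctly transport the relation to $\mathbf{a}_n=M_n(\mathbf{x})^T\mathbf{a}$ and note it keeps mixed signs while the orbit stays in the open cone, but "run a descent" is precisely the missing content, and invoking the Selmer conjugacy is circular since no finiteness statement for Selmer on rationally dependent rays is proved anywhere in the paper. The paper's argument is: first dispose of the case where some $F_C^r(\mathbf{x})$ has a zero entry (this already forces a $\{c_1^2,c_2^2\}^\omega$ tail, contradicting (iii)); then factor the directive sequence over $\{c_1c_2^kc_1,\ c_2c_1^kc_2\}$, compute $\ell_{m+1}=\ell_m C_1C_2^kC_1$ or $\ell_m C_2C_1^kC_2$ explicitly in the four parity cases, and take as descent invariant $D_m$, the difference between the maximal and minimal entries of the integer covector $\ell_m$ (a positive integer, by the mixed-sign observation). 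One checks that $(D_m)$ is non-increasing, that it can stall only when a specific entry of $\ell_m$ lies inclusively between the other two, and that under (iii) one must infinitely often encounter either a transition of the second or fourth type, or a first-type transition followed by a third-type one — each of which forces $D_{m+2}<D_m$. Since a non-increasing sequence of positive integers cannot decrease infinitely often, this is the desired contradiction. Without this invariant (or an equivalent one) your proof of the key implication is incomplete. (A minor point: like the paper, you only sketch the frequency claim; your nested-cone contraction argument is the standard one and is at the level of detail the paper itself supplies.)
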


\begin{proof}
Let us first prove that (ii) and (iii) are equivalent.
Assume that $\mathbf s = (\tau_n) \in \mathcal{C}^* \{c_1^2, c_2^2\}^\omega$.
Then there exists $r\in\mathbb N$ such that for all $i\in\mathbb N$,
$\tau_{r+2i} = \tau_{r+2i+1}$, and $\tau_{r+2i}\tau_{r+2i+1}$ is
either $c_1^2$ or $c_2^2$. 
Observe that $c_1^2(1) = 1$, $c_2^2(3)=3$,
and $c_1^2(3) = c_2^2(1) = 13$. Let $r'>r$. 
If $r'-r$ is even,
then $\tau_r\tau_{r+1}\ldots\tau_{r'-1}(1)$ does not contain the letter $2$.
If $r'-r$ is odd,
then $\tau_r\tau_{r+1}\ldots\tau_{r'-1}(2)$ does not contain the letter $2$.
Therefore the directive sequence $\mathbf s$ is not primitive.

Conversely, if $\mathbf{s} \not\in \mathcal{C}^* \{c_1^2, c_2^2\}^\omega$,
then $\mathbf{s}$ contains infinitely many occurrences of words in
$\{c_1c_2^{2i+1}c_1^jc_2^kc_1^lc_2^m,
   c_2c_1^{2i+1}c_2^jc_1^kc_2^lc_1^m
\colon i \in \mathbb N, j,k,l,m \in \mathbb N \setminus \{0\}\}$.
It can be checked that all the matrices of these substitutions have
positive entries, so that $\mathbf s$ is primitive.

Let us now assume that (iii) does not hold. Then, as above,
there exists $r\in\mathbb N$ such that for all $i\in\mathbb N$,
$\tau_{r+2i} = \tau_{r+2i+1}$. Note that, if $\mathbf y=(y_1,y_2,y_3)$,
then $C_1^{-2}\mathbf y=(y_1-y_2-y_3,y_2,y_3)$ and
$C_2^{-2}\mathbf y=(y_1,y_2,y_3-y_1-y_2)$. In both cases, the middle
entry is unchanged, and the sum of the two other entries
decreases by at least $y_2$. Let $F_C^r(\mathbf x) = (y_1,y_2,y_3)$
and $F_C^{r+2i}(\mathbf x) = (z_1,z_2,z_3)$. Then $z_2=y_2$ and
$z_1+z_3 \leq y_1+y_3-iy_2$. This is possible for all $i$ only
if $y_2 = 0$, and then $\ell' F_C^r(\mathbf x) = 0$,
where $\ell'$ is the row vector $\ell' = (0, 1, 0)$.
Then $\ell \mathbf x = 0$ where
$\ell = \ell' M(F_C^{r-1}(\mathbf x))^{-1}\ldots M(\mathbf x)^{-1}$
is a nonzero integer row vector, showing that the entries of $\mathbf x$
are rationally dependent.

Finally, let us assume that (iii) holds and (i) does not hold.
Observe first that, if $F_C^r(\mathbf x)$ has a zero entry
for some $r$, then either $F_C^r(\mathbf x)$ or $F_C^{r+1}(\mathbf x)$
has a zero middle entry, and from this point on the directive sequence
can be factored over $\{c_1^2, c_2^2\}$, contradicting (iii).
From now on we assume that all entries of $F_C^n(\mathbf x)$
are positive for all $n$

Let $\ell_0$
be a nonzero integer row vector such that $\ell_0 \mathbf x = 0$.
The directive sequence can be factored over
$\{c_1c_2^kc_1, c_2c_1^kc_2 \colon k\in\mathbb N\}$. Let us consider the
sequence $(n_m)$ such that $n_0=0$ and $\tau_{n_m}\ldots\tau_{n_{m+1}-1}$
is in this set for all $m\in\mathbb N$.
Let $\ell_m = \ell_0 M(\mathbf x) \ldots M(F_C^{n_m-1}(\mathbf x))$.
Then $\ell_m$ is a nonzero integer row vector such that
$\ell_m F_C^{n_m}(\mathbf x) = 0$, and 
$\ell_{m+1}$ is either $\ell_m C_1C_2^kC_1$ or $\ell_m C_2C_1^kC_2$
for some~$k$.

Assume that $\ell_m = (a, b, c)$. Then $\ell_{m+1}$ is one of
\begin{eqnarray*}
\ell_m C_1C_2^{2k}C_1   &=& (a', a'+b, a'+c) \text{ with } a'=a+kb,\\
\ell_m C_1C_2^{2k+1}C_1 &=& (a'-b, a', a'-c) \text{ with } a'=a+(k+1)b+c,\\
\ell_m C_2C_1^{2k}C_2   &=& (c'+a, c'+b, c') \text{ with } c'=c+kb,\\
\ell_m C_2C_1^{2k+1}C_2 &=& (c'-a, c', c'-b) \text{ with } c'=c+(k+1)b+a.
\end{eqnarray*}

Define $D_m$ as the difference between the maximum and the minimum
entry of $\ell_m$. Note that, as $F_C^{n_m}(\mathbf x)$ has positive entries,
the maximum entry of $\ell_m$ is positive and the minimum entry is negative.
Then $D_m = \max(|b-a|, |c-b|, |c-a|)$.
In the first two cases $D_{m+1} = \max(|b|, |c|, |c-b|)$. If $a$ is
(inclusively) between $b$ and $c$, which must then have opposite signs, then
$D_{m+1} = D_m = |c-b|$. Otherwise $D_{m+1} < D_m$.
Similarly, in the other two cases $D_{m+1} = \max(|a|, |b|, |b-a|)$,
and $D_{m+1} = D_m$ if $c$ is inclusively between $a$ and $b$, 
while $D_{m+1} < D_m$ otherwise.

The sequence of positive integers $(D_m)$ is non-increasing.
To reach a contradiction, we need to show that it decreases
infinitely often.

If for large enough $m$ all transitions between
$\ell_m$ and $\ell_{m+1}$
are of the first type, then (iii) is not satisfied. Similarly,
if for large enough $m$ all transitions are of the third type,
then (iii) is not satisfied. So we must either have infinitely often
transitions of the second or fourth type, or infinitely often
a transition of the first type followed by a transition of the third type.

Assume first that the transition between $\ell_m$
and $\ell_{m+1}$ is of the second type.
Then $\ell_{m+1} = (a'-b, a', a'-c)$ and $\ell_{m+2}$ is one of
\begin{eqnarray*}
\ell_{m+1} C_1C_2^{2k'}C_1   &=& (a'', a''+a', a''+a'-c) \text{ with } a''=a'-b+k'a',\\
\ell_{m+1} C_1C_2^{2k'+1}C_1 &=& (a''-a', a'', a''-a'+c) \text{ with } a''=a'-b+(k'+1)a'+a'-c,\\
\ell_{m+1} C_2C_1^{2k'}C_2   &=& (c''+a'-b, c''+a', c'') \text{ with } c''=a'-c+k'a',\\
\ell_{m+1} C_2C_1^{2k'+1}C_2 &=& (c''-a'+c, c'', c''-a') \text{ with } c''=a'-c+(k'+1)a'+a'-b.
\end{eqnarray*}

If $D_{m+1}=D_m$, then $a$ is between $b$ and $c$ which must have opposite signs.
Then $a'$ is strictly between $a'-b$ and $a'-c$, which implies in all four
cases that $D_{m+2}<D_{m+1}$. So we always have $D_{m+2}<D_m$.

The case where the transition between $\ell_m$ and
$\ell_{m+1}$ is of the fourth type is similar.
Assume now that this transition is of the first type, and the
transition between $\ell_{m+1}$ and $\ell_{m+2}$ is of the third type.
Then $\ell_{m+1} = (a', a'+b, a'+c)$ and
$\ell_{m+2} = (c''+a', c''+a'+b, c'')$ with $c''=a'+c+k'(a'+b)$.
If $D_{m+1}=D_m$, then $a$ is between $b$ and $c$ which must have opposite signs, so that $a'$ is strictly between $a'+b$ and $a'+c$,
which implies that $D_{m+2}<D_{m+1}$. So again we always have $D_{m+2}<D_m$,
and this concludes the proof.
\end{proof}



\section{Factor Complexity of Primitive $\mathcal{C}$-adic Words}

Let $\bw$ be a (infinite) word over some alphabet $A$.
We let $\fac(\bw)$ denote the set of factors of $\bw$, i.e., 
$\fac(\bw) = \{u \in A^* \mid \exists i \in \N: \bw_i \cdots \bw_{i+|u|-1} = u\}$.
The {\em extension set} of $u \in \fac(\bw)$ is the set
$E(u,\bw)=\{(a,b) \in A \times A \mid aub \in \fac(\bw)\}$. 
We represent it by an array of the form
\[
E(u,\bw)\quad=\quad
\begin{array}{c|ccc}
    & \cdots	 	&  j 	& \cdots			\\[-1pt]
\hline
    \cdots	&  &       &       		\\[-1pt]
i	& &		\times					\\[-1pt]
\cdots	&        &			& 			
\end{array},
\]
where a symbol $\times$ in position $(i,j)$ means that $(i,j)$ belongs to $E(u,\bw)$.
When the context is clear we omit the information on $\bw$ and simply write $E(u)$.
We also represent it as an undirected bipartite graph, called the {\em extension graph}, whose set of vertices is the disjoint union of $\pi_1(E(u,\bw))$ and $\pi_2(E(u,\bw))$ ($\pi_1$ and $\pi_2$ respectively being the projection on the first and on the second component) and its edges are the pairs $(a,b) \in E(u,\bw)$.
A factor $u$ of $\bw$ is said to be {\em bispecial} whenever $\# \pi_1(E(u,\bw))>1$ and $\# \pi_2(E(u,\bw))>1$.
A bispecial factor $u \in \fac(\bw)$ is said to be {\em ordinary} if there exists $(a,b) \in E(u,\bw)$ such that $E(u,\bw) \subset (\{a\} \times A) \cup (A \times \{b\})$.


To simplify proofs, we consider 
$\mathcal{C}' = \{c_{11},c_{22},c_{122},c_{211},c_{121},c_{212}\}$, where
\[
\begin{array}{lll}
c_{11} = c_1^2:
\begin{cases}
1 \mapsto 1		\\
2 \mapsto 12 	\\
3 \mapsto 13
\end{cases}
&
c_{122} = c_1 c_2^2:
\begin{cases}
1 \mapsto 12	\\
2 \mapsto 132 	\\
3 \mapsto 2
\end{cases}
&
c_{121} = c_1c_2c_1:
\begin{cases}
1 \mapsto 13		\\
2 \mapsto 132 	\\
3 \mapsto 12
\end{cases}
\\ 
\\
c_{22} = c_2^2:
\begin{cases}
1 \mapsto 13	\\
2 \mapsto 23 	\\
3 \mapsto 3
\end{cases}
&
c_{211} = c_2 c_1^2:
\begin{cases}
1 \mapsto 2	\\
2 \mapsto 213 	\\
3 \mapsto 23
\end{cases}
&
c_{212} = c_2c_1c_2:
\begin{cases}
1 \mapsto 23		\\
2 \mapsto 213 	\\
3 \mapsto 13
\end{cases}
\end{array}.
\]

Any (primitive) $\mathcal{C}$-adic word is a (primitive) $\mathcal{C}'$-adic word and conversely. We let $\varepsilon$ denote the empty word.
We have the following result, where uniqueness follows
from the fact that $\tau(A)$ is a code.

\begin{lemma}[Synchronization]
\label{lemma:synchronization}
Let $\bw$ be a $\mathcal{C}'$-adic word with directive sequence $(\tau_n)_{n \in \N} \in \mathcal{C}'^\N$.
If $u \in \fac(\bw)$ is a non-empty bispecial factor, then
\begin{enumerate}
\item
If $\tau_0 = c_{11}$, there is a unique word $v \in \fac(\bw^{(1)})$ such that $u = \tau_0(v) 1$.
\item
If $\tau_0 = c_{22}$, there is a unique word $v \in \fac(\bw^{(1)})$ such that $u = 3 \tau_0(v)$.
\item
If $\tau_0 = c_{122}$, there is a unique word $v \in \fac(\bw^{(1)})$ such that $u \in 2 \tau_0(v) \{1,\varepsilon\}$.
\item
If $\tau_0 = c_{211}$, there is a unique word $v \in \fac(\bw^{(1)})$ such that $u \in \{3,\varepsilon\} \tau_0(v) 2$.
\item
If $\tau_0 = c_{121}$, there is a unique word $v \in \fac(\bw^{(1)})$ such that $u \in \{2,\varepsilon\} \tau_0(v) \{1,13\}$.
\item
If $\tau_0 = c_{212}$, there is a unique word $v \in \fac(\bw^{(1)})$ such that $u \in \{3,13\} \tau_0(v) \{2,\varepsilon\}$.
\end{enumerate}
Furthermore, $v$ is a bispecial factor of $\bw^{(1)}$ and is shorter than $u$.
\end{lemma}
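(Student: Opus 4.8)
The plan is to handle each of the six substitutions in $\mathcal{C}'$ separately, using the fact that $\tau_0(A)$ is a code to obtain a synchronization / desubstitution property. The core idea is that a bispecial factor $u$ of $\bw$ — being both left- and right-special — must be long enough and must align with the block structure imposed by $\tau_0$. Concretely, for each morphism I would examine the images $\tau_0(1)$, $\tau_0(2)$, $\tau_0(3)$ and determine the synchronization points: positions inside $\bw$ where, regardless of which factor $u$ we pick, the occurrence of $u$ must decompose as a prefix/suffix pad plus a full $\tau_0$-image of a factor $v$ of $\bw^{(1)}$. Since $\bw = \tau_0(\bw^{(1)})$, every factor of $\bw$ is a factor of $\tau_0(\bw^{(1)})$, so it suffices to understand how factors of $\bw$ sit inside concatenations $\tau_0(b_1)\tau_0(b_2)\cdots$.

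First I would record, for each $\tau_0 \in \mathcal{C}'$, the set of first letters and last letters of the images $\tau_0(a)$, $a \in \A$, together with the possible two-letter "seams" $\tau_0(a)\tau_0(b)$ at block boundaries. For instance, when $\tau_0 = c_{11}$ we have $1 \mapsto 1$, $2 \mapsto 12$, $3 \mapsto 13$, so every image begins with $1$; this forces a left-special factor to begin immediately after a block boundary, and a short case analysis on which letters can precede a $1$ (only $1$ at the end of a block, i.e. after $\tau_0(1)$, $\tau_0(2)$, or inside) pins down the padding. The bispeciality hypothesis $\#\pi_1(E(u))>1$ and $\#\pi_2(E(u))>1$ is exactly what rules out the degenerate alignments and forces $u$ to contain at least one complete image block, which is what lets me peel off the prefix and suffix pads stated in each case (the $1$, the $3$, the $\{1,\varepsilon\}$, etc.) and write the remainder as $\tau_0(v)$.

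Next, having written $u$ (possibly after stripping a short pad) as $\tau_0(v)$, I would verify that $v$ is itself bispecial in $\bw^{(1)}$. The left and right extensions of $v$ in $\bw^{(1)}$ map, via $\tau_0$, into the extensions of $u$ in $\bw$: because $\tau_0$ is injective on letters and its images form a code, distinct left-extending letters $a,a'$ of $v$ give rise to distinct seam patterns before $\tau_0(v)$, hence distinct left extensions of $u$, and symmetrically on the right. Thus $\#\pi_1(E(u))>1$ forces $\#\pi_1(E(v,\bw^{(1)}))>1$ and likewise for $\pi_2$, so $v$ is bispecial. That $v$ is shorter than $u$ is immediate since each $\tau_0$ is expanding: $|u| \geq |\tau_0(v)| > |v|$ whenever $v$ is non-empty, and the pad only adds length. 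Uniqueness of $v$ is precisely the coding property already invoked in the statement, so no separate argument is needed there.

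**The main obstacle** I anticipate is the bookkeeping for the three ternary morphisms $c_{122}, c_{211}, c_{121}, c_{212}$ whose synchronization sets are the two-element sets $\{2\tau_0(v), 2\tau_0(v)1\}$ and so on: here the pad is not uniquely determined by $u$ alone but by whether the bispecial factor's boundary falls before or after a partial block, which is why the statement allows $u$ to lie in a \emph{set} like $\{2,\varepsilon\}\tau_0(v)\{1,13\}$. Establishing that in every one of these alignments the interior still factors cleanly as $\tau_0(v)$ — and that the resulting $v$ is the same regardless of how the pad is apportioned — requires checking that the seams $\tau_0(a)\tau_0(b)$ never accidentally create an alternative desubstitution. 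I expect this to reduce, for each morphism, to inspecting a small finite list of two- and three-letter windows at block boundaries, tedious but entirely mechanical once the first-letter/last-letter tables are laid out.
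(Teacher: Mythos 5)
The paper does not actually supply a proof of this lemma: it is stated with only the one-line remark that ``uniqueness follows from the fact that $\tau(A)$ is a code,'' the case analysis being left to the reader. Your plan is the standard synchronization argument that the authors evidently have in mind --- for each $\tau_0$ one tabulates in which positions of the images $\tau_0(1),\tau_0(2),\tau_0(3)$ each letter of $\A$ can occur, observes that this forces the alignment of every occurrence of a letter with the block structure of $\tau_0(\bw^{(1)})$, and then uses left- and right-speciality to rule out the alignments in which the first (resp.\ last) letter of $u$ sits at a position that is always preceded (resp.\ followed) by the same letter. This correctly produces the stated pads, and your handling of the ambiguous seams for $c_{121}$ and $c_{212}$ (where the pad $13$ may be either a whole image or a proper prefix of one) is exactly the delicate point.

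One step of your sketch is stated in the wrong logical direction. To conclude that $v$ is bispecial from the bispeciality of $u$, it does not suffice that distinct extensions of $v$ yield distinct extensions of $u$: that gives an \emph{injection} from the extensions of $v$ into those of $u$, hence only $\#\pi_1(E(v))\leq\#\pi_1(E(u))$, which is the wrong inequality. What you need is that \emph{every} left (resp.\ right) extension of $u$ is induced by some left (resp.\ right) extension of $v$ --- i.e.\ the map from extensions of $v$ onto extensions of $u$ is surjective, so that a single extension of $v$ could only produce a single extension of $u$. This surjectivity is immediate from the same alignment analysis (the letter preceding the pad of $u$ is a determined function of the letter preceding $v$), and is in fact exactly the content of the paper's Lemma~3, so the fix is routine; but as written your argument proves the converse implication. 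A second, cosmetic point: bispeciality does not force $u$ to contain a complete image block ($v=\varepsilon$ is allowed, e.g.\ $u=21$ for $c_{122}$), and $|\tau_0(v)|>|v|$ can fail for the non-expanding images such as $c_{11}(1)=1$; in every case it is the mandatory nonempty pad that guarantees $|u|>|v|$, as you note.
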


Let $\bw$, $u$ and $v$ be as in Lemma~\ref{lemma:synchronization}. 
The word $v$ is called the {\em bispecial antecedent of $u$ under $\tau_0$}. 
Similarly, $u$ is called a {\em bispecial extended image of $v$ under $\tau_0$}.
Since the bispecial antecedent of a non-empty bispecial word is always shorter, for any bispecial factor $u$ of $\bw$, there is a unique sequence $(u_i)_{0 \leq i \leq n}$ such that 
\begin{itemize}
\item
$u_0 = u$, $u_n = \varepsilon$ and $u_i \neq \varepsilon$ for all $i < n$;
\item
for all $i < n$, $u_{i+1} \in \fac(\bw^{(i+1)})$ is the bispecial antecedent of $u_i$.
\end{itemize}
All bispecial factors of the sequence $(u_i)_{0 \leq i < n}$ are called the {\em bispecial descendants} of $\varepsilon$ in $\bw^{(n)}$.

As any bispecial factor of a primitive $\mathcal{C}$-adic word is a descendant of the empty word, to understand the extension sets of any bispecial word in $\bw$, we need to know the possible extension sets of $\varepsilon$ in $\bw^{(n)}$ and to understand how the extension set of a bispecial factor governs the extension sets of its bispecial extended images.

\begin{lemma}
\label{lemma: empty word}
If $\bw$ is a primitive $\mathcal{C}$-adic word with directive sequence $(\tau_n)_{n \in \N} \in \mathcal{C}'^\N$, then the extension set of $\varepsilon$ is one of the following, depending on $\tau_0$.

\medskip

\begin{center}
\begin{tabular}{ccc}
 \begin{tabular}{c|ccc}
 $\tau_0 = c_{11}$ 
 & $1$ & $2$ & $3$ \\
 \hline
 $1$ & $\times$ & $\times$ & $\times$ \\
 $2$ & $\times$ &   &   \\
 $3$ & $\times$ &   &   \\
 \end{tabular}
 &\qquad
 \begin{tabular}{c|ccc}
 $\tau_0 = c_{122}$ 
 & $1$ & $2$ & $3$ \\
 \hline
 $1$ &   & $\times$ & $\times$ \\
 $2$ & $\times$ & $\times$ &   \\
 $3$ &   & $\times$ &   
 \end{tabular}
 &\qquad
 \begin{tabular}{c|ccc}
 $\tau_0 = c_{121}$ 
 & $1$ & $2$ & $3$ \\
 \hline
 $1$ &   & $\times$ & $\times$ \\
 $2$ & $\times$ &   &   \\
 $3$ & $\times$ & $\times$ &   \\
 \end{tabular}
    \\ \\
 \begin{tabular}{c|ccc}
 $\tau_0 = c_{22}$ 
 & $1$ & $2$ & $3$ \\
 \hline
 $1$ &   &   & $\times$ \\
 $2$ &   &   & $\times$ \\
 $3$ & $\times$ & $\times$ & $\times$ \\
 \end{tabular}
 &\qquad
 \begin{tabular}{c|ccc}
 $\tau_0 = c_{211}$ 
 & $1$ & $2$ & $3$ \\
 \hline
 $1$ &   &   & $\times$ \\
 $2$ & $\times$ & $\times$ & $\times$ \\
 $3$ &   & $\times$ &   \\
 \end{tabular}
 &\qquad
 \begin{tabular}{c|ccc}
 $\tau_0 = c_{212}$ 
 & $1$ & $2$ & $3$ \\
 \hline
 $1$ &   &   & $\times$ \\
 $2$ & $\times$ &   & $\times$ \\
 $3$ & $\times$ & $\times$ &   \\
 \end{tabular}
 \end{tabular}
 \end{center}
\end{lemma}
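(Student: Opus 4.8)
The plan is to compute $E(\varepsilon,\bw)$ directly from the decomposition $\bw=\tau_0(\bw^{(1)})$. Since $E(\varepsilon,\bw)$ is exactly the set of length-$2$ factors of $\bw$, and since the shifted directive sequence $(\tau_n)_{n\geq 1}$ is again primitive, $\bw^{(1)}$ is a primitive $\mathcal{C}$-adic word; hence it is uniformly recurrent and, by primitivity, each of the three letters occurs in it infinitely often, so each letter has occurrences with a left neighbour and occurrences with a right neighbour. Writing $\bw^{(1)}=a_0a_1a_2\cdots$, we have $\bw=\tau_0(a_0)\tau_0(a_1)\tau_0(a_2)\cdots$, so every length-$2$ factor of $\bw$ is either \emph{internal} to a single image $\tau_0(a)$ (possible only when $|\tau_0(a)|\geq 2$) or a \emph{junction}, obtained by concatenating the last letter of $\tau_0(a)$ with the first letter of $\tau_0(b)$ for some factor $ab$ of $\bw^{(1)}$. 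No factor spans three images, since every image is non-empty.

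First I would list the internal factors, which are read off immediately from the six given images. For the junctions I would use the structural observation that each morphism of $\mathcal{C}'$ is either \emph{left proper} or \emph{right proper}: the images of $c_{11}$, $c_{211}$ and $c_{121}$ all begin with a common letter $p$ (equal to $1$, $2$ and $1$ respectively), while the images of $c_{22}$, $c_{122}$ and $c_{212}$ all end with a common letter $q$ (equal to $3$, $2$ and $3$ respectively). This is what makes the junction analysis finite: in the left-proper case the junction produced by a factor $ab$ of $\bw^{(1)}$ is the last letter of $\tau_0(a)$ followed by $p$, and therefore depends on $a$ alone, while in the right-proper case it is $q$ followed by the first letter of $\tau_0(b)$, and depends on $b$ alone.

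Then I would treat the two cases. In the left-proper case, as $a$ ranges over the letters occurring in $\bw^{(1)}$, which is all of $\{1,2,3\}$ by primitivity, the last letter of $\tau_0(a)$ ranges over the last letters of the three images, and appending $p$ yields precisely the junction factors; the right-proper case is symmetric. Taking the union of the internal and junction factors and comparing with the tabulated images produces the six arrays in the statement. The step needing the most care, and the only place where the hypotheses are genuinely used, is \emph{completeness}: the decomposition shows a priori only that $E(\varepsilon,\bw)$ is \emph{contained} in the displayed array, so I must also check that each listed pair actually occurs. Each internal factor occurs because the corresponding letter occurs in $\bw^{(1)}$, and each junction occurs because primitivity forces all three letters to appear, each with a right neighbour (resp.\ a left neighbour), so that every last letter (resp.\ first letter) of an image is realized at some junction. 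This yields equality with the tabulated array for each value of $\tau_0$.
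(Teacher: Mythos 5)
Your proposal is correct and is essentially the paper's own argument spelled out in full: the paper's proof likewise invokes primitivity to get all three letters (with neighbours) in $\bw^{(1)}$ and the left/right properness of each $\tau_0\in\mathcal{C}'$ to reduce the junction factors to a finite check. The internal/junction case split and the verification of the six tables are exactly the computation the paper leaves implicit.
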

\begin{proof}
The directive sequence being primitive, all letters of $\cal A$ occur in $\bw^{(1)}$.
The result then follows from the fact that all morphisms $\tau$ in $\mathcal{C}'$ are either left proper ($\tau(\mathcal{A}) \subset a \mathcal{A}^*$ for some letter $a$) or right proper ($\tau(\mathcal{A}) \subset \mathcal{A}^* a$ for some letter $a$). 
\end{proof}

The next lemma describes how the extension set of a bispecial word determines the extension set of any of its bispecial extended images.

\begin{lemma}
\label{lemma:extensions from antecedent}
Let $\bw$ be a $\mathcal{C}'$-adic word with directive sequence $(\tau_n)_{n \in \N} \in \mathcal{C}'^\N$.
If $u \in \fac(\bw)$ is the bispecial extended image of $v \in \fac(\bw^{(1)})$  and if $x,y  \in \mathcal{A}^*$ are such that $u = x \tau_0(v) y$, then
\begin{enumerate}
\item
if $\tau_0(\mathcal{A}) \subset i \mathcal{A}^*$ for some letter $i \in \mathcal{A}$, we have
\[
	E(u,\bw) = \{(a,b) \mid	\exists (a',b') \in E(v,\bw^{(1)}): 
	 \tau_0(a') \in \mathcal{A}^*ax \, \wedge \, \tau_0(b')i \in y b \mathcal{A}^*\};
\]
\item
if $\tau_0(\mathcal{A}) \subset \mathcal{A}^* i$ for some letter $i \in \mathcal{A}$, we have
\[
	E(u,\bw) = \{(a,b) \mid	\exists (a',b') \in E(v,\bw^{(1)}): 
	 i \tau_0(a') \in \mathcal{A}^*ax \, \wedge \, \tau_0(b') \in y b \mathcal{A}^*\}.
\]
\end{enumerate}
\end{lemma}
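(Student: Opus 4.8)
The plan is to prove the two set inclusions separately, after a reduction to a single case. First I would record that, by definition of the $S$-adic word, $\bw = \tau_0(\bw^{(1)})$, and that each morphism of $\mathcal{C}'$ is either left proper or right proper, so exactly one of the two displayed hypotheses applies. I would treat the left-proper case $\tau_0(\mathcal{A}) \subset i\mathcal{A}^*$ in detail; the right-proper case is entirely analogous, with the roles of left and right (prefixes and suffixes) interchanged, which accounts for the auxiliary letter $i$ being prepended on the left there rather than appended on the right.

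For the inclusion $\supseteq$, suppose $(a',b') \in E(v,\bw^{(1)})$ satisfies $\tau_0(a') \in \mathcal{A}^* a x$ and $\tau_0(b')i \in yb\mathcal{A}^*$, and write $\tau_0(a') = w\,ax$ and $\tau_0(b')i = yb\,z$. Since $a'vb' \in \fac(\bw^{(1)})$ and $\bw^{(1)}$ is right infinite, there is a letter $c'$ with $a'vb'c' \in \fac(\bw^{(1)})$, hence $\tau_0(a')\tau_0(v)\tau_0(b')\tau_0(c') \in \fac(\bw)$. As $\tau_0$ is left proper, $\tau_0(c')$ begins with $i$, so the shorter word $\tau_0(a')\tau_0(v)\tau_0(b')i$ is still a factor of $\bw$. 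Substituting the two factorizations yields
\[
\tau_0(a')\,\tau_0(v)\,\tau_0(b')\,i = w\,a\,x\,\tau_0(v)\,y\,b\,z = w\,(a\,u\,b)\,z,
\]
so $aub \in \fac(\bw)$ and $(a,b) \in E(u,\bw)$.

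For the inclusion $\subseteq$, take $(a,b) \in E(u,\bw)$, so that $aub = ax\,\tau_0(v)\,yb \in \fac(\bw)$. Here I would desubstitute this occurrence: using that $\bw = \tau_0(\bw^{(1)})$ and that $\tau_0(\mathcal{A})$ is a code, the occurrence lies inside the $\tau_0$-image of a factor of $\bw^{(1)}$, and I claim it is synchronized so that the central block $\tau_0(v)$ is exactly the image of an occurrence of $v$, with image boundaries on each side of it. Granting this, the letter $a$ lies in the block $\tau_0(a')$ immediately to the left, so that $ax$ is a suffix of $\tau_0(a')$, while $b$ lies in the block $\tau_0(b')$ immediately to the right or is the leading letter $i$ of the block following $\tau_0(b')$, so that $yb$ is a prefix of $\tau_0(b')i$; moreover $a'vb' \in \fac(\bw^{(1)})$ gives $(a',b') \in E(v,\bw^{(1)})$ with the two required properties.

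The main obstacle is precisely the synchronization step in this reverse inclusion: one must rule out that the occurrence of $\tau_0(v)$ inside $aub$ straddles image boundaries differently, which would desubstitute to a word other than $v$. I expect to settle this from the coding property of $\tau_0$ together with the fact, supplied by Lemma~\ref{lemma:synchronization}, that $v$ is bispecial and $u$ is its bispecial extended image; these force the parse of the occurrence to be the expected one and, in particular, pin down the boundary case in which $b$ coincides with the leading letter $i$ of the next image block. This last point is exactly why the condition is written with $\tau_0(b')i$ rather than $\tau_0(b')$ in the left-proper case.
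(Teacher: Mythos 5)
Your proposal is correct and follows essentially the same route as the paper: the $\supseteq$ direction by extending $a'vb'$ to $a'vb'c$ and using left-properness of $\tau_0$ to extract the extra letter $i$, and the $\subseteq$ direction by desubstitution, including the boundary case $\tau_0(b')=y$ forcing $b=i$. The synchronization step you flag as the main obstacle is handled in the paper exactly as you anticipate, namely by invoking Lemma~\ref{lemma:synchronization} (uniqueness of the decomposition $u=x\tau_0(v)y$ around the bispecial antecedent), so there is no gap.
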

\begin{proof}
Let us prove the first equality, the second one being symmetric.

For the inclusion $\supseteq$, consider $(a',b') \in E(v)$ such that $\tau_0(a') \in \mathcal{A}^* ax$ and $\tau_0(b')i \in y b \mathcal{A}^*$.
Let $c\in\cal A$ be such that $a'vb'c$ is a factor of $\mathbf w^{(1)}$.
Then $\tau_0(a'vb'c) \in \tau_0(a'vb')i \mathcal{A}^* \subseteq \mathcal{A}^* ax \tau_0(v) yb \mathcal{A}^*$
is a factor of $w$ and we have $(a,b) \in E(u)$.

For the inclusion $\subseteq$, consider $(a,b) \in E(u)$.
Using Lemma~\ref{lemma:synchronization}, the word $ax$ (resp., $yb$) is the suffix (resp., prefix) of a word $\tau_0(x')$, $x' \in \mathcal{A}^+$ (resp., $\tau_0(y')$, $y' \in \mathcal{A}^+$) such that $x' v y' \in \fac(\bw^{(1)})$.
Furthermore, still using Lemma~\ref{lemma:synchronization}, $x$ is a strict suffix of $\tau_0(a')$, where $x' \in \mathcal{A}^* a'$ and $y$ is a prefix of $\tau_0(b')$, where $y' \in b' \mathcal{A}^*$.
If $y$ is a strict prefix of $\tau_0(b')$, then $(a',b')$ is an extension of $v$ such that $\tau_0(a') \in \mathcal{A}^* ax$ and $\tau_0(b') \in yb \mathcal{A}^*$.
Otherwise, if $\tau_0(b')=y$, we have $b = i$ since $\tau_0(\mathcal{A}) \subset i \mathcal{A}^*$ and $(a',b')$ is an extension of $v$ such that $\tau_0(a') \in \mathcal{A}^* ax$ and $\tau_0(b')i = yi$, which concludes the proof.
\end{proof}

Lemma~\ref{lemma:extensions from antecedent} can be more easily understood using the tabular representation of the extension sets.
Indeed, for the first case ($\tau_0(\mathcal{A}) \subset i \mathcal{A}^*$), the extensions of $u = x \tau(v) y$ can be obtained as follows: 1) replace any left extensions $a$ by $\tau(a)$ and any right extension $b$ by $\tau(b)i$; 2) remove the suffix $x$ from the left extensions whenever it is possible (otherwise, delete the row) and remove the prefix $y$ from the right extensions whenever it is possible (otherwise, delete the column); 3) keep only the last letter of the left extensions and the first letter of the right extensions; 4) permute and merge the rows and columns with the same label. 
The second case ($\tau_0(\mathcal{A}) \subset \mathcal{A}^* i$) is similar.


Let us make this more clear on an example and consider the extension set $E(v) = \{(1,3),(2,1),(2,2),(2,3),(3,2)\}$.
This extension set corresponds to the extension set of the empty word whenever the last applied substitution is $c_{211}$ (see Lemma~\ref{lemma: empty word}).
Using Lemma~\ref{lemma:extensions from antecedent}, the extension sets of $2 c_{122}(v)$ and $2 c_{121}(v)1$ are obtained as follows (arrow labels indicate above step number):
\begin{eqnarray*}
\begin{array}{c}
E(v)\\
\begin{array}{r|ccc}
  & 1 & 2 & 3	\\
\hline
 1 &   		&   	 & \times 	\\
 2 & \times & \times & \times 	\\
 3 &   		& \times &   
\end{array}
\end{array}
\xrightarrow[]{1)}
\begin{array}{c}
E(c_{122}(v))\\
\begin{array}{r|ccc}
 & 12 & 132 & 2	\\
\hline
 212 	&   		&   	 & \times 	\\
 2132 	& \times 	& \times & \times 	\\
 22 	&   		& \times &   
\end{array}
\end{array}
& \xrightarrow[]{2) \text{ and }3)} &
\begin{array}{c}
E(2c_{122}(v))\\
\begin{array}{r|ccc}
& 1 & 1 & 2	\\
\hline
 1 	&   		&   	 & \times 	\\
 3 	& \times 	& \times & \times 	\\
 2	&   		& \times &   
\end{array}
\end{array}
\xrightarrow[]{4)}
\begin{array}{c}
E(2c_{122}(v))\\
\begin{array}{r|cc}
& 1 & 2 \\
\hline
 1 & 		& \times 		\\
 2 & \times &   			\\
 3 & \times	& \times    
\end{array}
\end{array}
\\
\\
\begin{array}{c}
E(v)\\
\begin{array}{r|ccc}
& 1 & 2 & 3	\\
\hline
 1 &   		&   	 & \times 	\\
 2 & \times & \times & \times 	\\
 3 &   		& \times &   
\end{array}
\end{array}
\xrightarrow[]{1)}
\begin{array}{c}
E(c_{121}(v))\\
\begin{array}{r|ccc}
& 131 & 1321 & 121	\\
\hline
 13 	&   		&   	 & \times 	\\
 132 	& \times 	& \times & \times 	\\
 12 	&   		& \times &   
\end{array}
\end{array}
& \xrightarrow[]{2) \text{ and }3)} &
\begin{array}{c}
E(2c_{121}(v)1)\\
\begin{array}{r|ccc}
& 3 & 3 & 2	\\
\hline
 .	&   		&        &   \\
 3 	& \times 	& \times & \times 	\\
 1	&   		& \times &   
\end{array}
\end{array}
\xrightarrow[]{4)}
\begin{array}{c}
E(2c_{121}(v)1)\\
\begin{array}{r|cc}
& 2 & 3 \\
\hline
 1 & 		& \times 		\\
 3 & \times	& \times    
\end{array}
\end{array}
\end{eqnarray*}

The proof of Proposition~\ref{prop: cassaigne are tree} will essentially consists in describing how ordinary bispecial words occur.
The next lemma allows to understand when bispecial words have ordinary bispecial extended images.

\begin{lemma}
\label{lemma:ordinary preserved}
Let $\bw$ be a $\mathcal{C}'$-adic word with directive sequence $(\tau_n)_{n \in \N} \in \mathcal{C}'^\N$.
Let $u \in \fac(\bw)$ be a non-empty bispecial factor and $v$ be its bispecial antecedent. We have the following.
\begin{enumerate}
\item \label{item:E(v)=E(u)}
If $\tau_0 \in \{c_{11},c_{22}\}$, then $E(u)=E(v)$;
\item \label{item:v=epsilon}
if $v = \varepsilon$ and $\tau_0 \in \{c_{121},c_{212}\}$, then $u$ is ordinary;
\item \label{item:E(v) C12}
if $ \tau_0 \in \{c_{122},c_{121},c_{212}\}$, if $E(v) \subseteq (A \times \{1,2\}) \cup \{(a,3)\}$ for some letter $a \in A$ with $E(v) \cap \{(a,1), (a,2)\} \neq\emptyset$ and if $E(v) \setminus \{(a,3)\}$ is the extension set of an ordinary bispecial word, then $u$ is ordinary;
\item \label{item:E(v) L23}
if $ \tau_0 \in \{c_{211},c_{121},c_{212}\}$, if $E(v) \subseteq (\{2,3\} \times A) \cup \{(1,a)\}$ for some letter $a \in A$ with $E(v) \cap \{(2,a), (3,a)\} \neq\emptyset$ and if $E(v) \setminus \{(1,a)\}$ is the extension set of an ordinary bispecial word, then $u$ is ordinary;
\item \label{item:ordinary preserved}
if $v$ is ordinary, then $u$ is ordinary
\end{enumerate}
\end{lemma}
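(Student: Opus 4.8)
\section*{Proof proposal}

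The plan is to prove all five parts by a single mechanism. Given a non-empty bispecial $u$ with bispecial antecedent $v$, Lemma~\ref{lemma:synchronization} writes $u = x\,\tau_0(v)\,y$ with explicit short words $x,y$ depending only on $\tau_0$, and each $\tau_0 \in \mathcal{C}'$ is left proper or right proper; Lemma~\ref{lemma:extensions from antecedent}, read as the four-step procedure, then expresses $E(u)$ as an explicit local rewriting of $E(v)$. Since $\mathcal{C}'$ is finite, every part reduces to inspecting finitely many row/column configurations. I would first record the symmetry $\Phi$ that conjugates by the transposition $1\leftrightarrow 3$ and reverses words: it preserves $\mathcal{C}'$ while exchanging $c_{122}\leftrightarrow c_{211}$ and $c_{121}\leftrightarrow c_{212}$, and swapping left with right extensions. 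This makes part~\ref{item:E(v) L23} the mirror image of part~\ref{item:E(v) C12}, and halves the work in parts~\ref{item:E(v)=E(u)} and~\ref{item:ordinary preserved}.

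For part~\ref{item:E(v)=E(u)} I take $\tau_0=c_{11}$, which is left proper with $i=1$ and $x=\varepsilon$, $y=1$. Running steps 1--3, the map sending a left extension $a'$ to the last letter of $\tau_0(a')$ and a right extension $b'$ to the first letter of $\tau_0(b')1$ after deleting the prefix $1$ is the identity on $\{1,2,3\}$, hence $E(u)=E(v)$; the case $c_{22}$ follows by $\Phi$. For part~\ref{item:v=epsilon}, $v=\varepsilon$ gives $\tau_0(v)=\varepsilon$, so $E(v)$ is one of the six tables of Lemma~\ref{lemma: empty word} (selected by $\tau_1$); I would apply the procedure for $c_{121}$ and $c_{212}$ to each table and to each admissible pair $(x,y)$ allowed by Lemma~\ref{lemma:synchronization}, and read off that the resulting set always lies inside a cross.

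The heart of the argument is part~\ref{item:E(v) C12} (with part~\ref{item:E(v) L23} its $\Phi$-image). Here $E(v)$ is an ordinary cross to which one extra right extension $(a,3)$ has been adjoined. I would run the procedure and track the fate of this offending pair under $\tau_0\in\{c_{122},c_{121},c_{212}\}$: either the suffix $x$ (resp.\ prefix $y$) cannot be removed, so that step 2 deletes the corresponding row or column, or the image of $(a,3)$ collides in step 4 with a pair already produced by the cross $E(v)\setminus\{(a,3)\}$. In either case the surviving set remains inside a cross and $u$ is ordinary. The main obstacle is precisely this bookkeeping: for each of the three substitutions and each admissible center of the cross, one must verify that $(a,3)$ never lands on a fresh off-cross position after the deletions and the merging of equal labels. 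This forces careful use of which images of $\tau_0$ end in $3$ and which begin with $1$, and is where the hypothesis that $E(v)\setminus\{(a,3)\}$ is already ordinary is consumed.

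Finally, for part~\ref{item:ordinary preserved} I let $v$ be ordinary with cross center $(a_0,b_0)$ and split on $\tau_0$. The cases $\tau_0\in\{c_{11},c_{22}\}$ are immediate from part~\ref{item:E(v)=E(u)}. If $b_0\neq 3$ the right extension $3$ occurs at most once, so part~\ref{item:E(v) C12} applies to every $\tau_0\in\{c_{122},c_{121},c_{212}\}$; dually, if $a_0\neq 1$ part~\ref{item:E(v) L23} applies to every $\tau_0\in\{c_{211},c_{121},c_{212}\}$. Because $v$ is bispecial, these two conditions already cover $c_{121}$ and $c_{212}$ unless the center is exactly $(1,3)$, and cover $c_{122}$ (resp.\ $c_{211}$) unless $b_0=3$ (resp.\ $a_0=1$). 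I would dispatch these residual configurations --- which by $\Phi$ reduce to $c_{122}$ with $b_0=3$ and $c_{121}$ with center $(1,3)$ --- by the same direct application of the procedure, checking that the image is again a cross. I expect confirming that these residual centers are the only ones escaping parts~\ref{item:E(v) C12} and~\ref{item:E(v) L23}, and that each nonetheless yields an ordinary image, to be the delicate point.
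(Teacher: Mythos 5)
Your proposal is correct and follows essentially the same route as the paper: both rest on Lemma~\ref{lemma:synchronization} plus the tabular reading of Lemma~\ref{lemma:extensions from antecedent}, the left/right symmetry exchanging $c_{122}\leftrightarrow c_{211}$ and $c_{121}\leftrightarrow c_{212}$, and a finite check of the resulting row/column operations. The one simplification the paper makes that would spare you most of the bookkeeping you anticipate in parts~\ref{item:E(v) C12} and~\ref{item:ordinary preserved} is the observation that for $\tau_0\in\{c_{122},c_{121},c_{212}\}$ the passage from $E(v)$ to $E(u)$ is always, up to permutations of rows and columns, either ``delete column $3$'' or ``merge columns $1$ and $2$'', from which ordinariness of $u$ is immediate in each case.
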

\begin{proof}
Items~\ref{item:E(v)=E(u)} and~\ref{item:ordinary preserved} directly follow from Lemma~\ref{lemma:extensions from antecedent}. Item~\ref{item:v=epsilon} can be checked by hand using Lemma~\ref{lemma: empty word} and Lemma~\ref{lemma:extensions from antecedent}.
Let us prove Item~\ref{item:E(v) C12}, Item~\ref{item:E(v) L23} being symmetric.

Let us say that two extension sets $E$ and $E'$ are equivalent whenever there exist two permutations $p_1$ and $p_2$ of $A$ such that $E = \{(p_1(a),p_2(b)) \mid (a,b) \in E'\}$.
If $\tau_0 = c_{122}$, then $u \in \{2 \sigma(v), 2\sigma(v)1\}$ by Lemma~\ref{lemma:synchronization}. 
We make use of Lemma~\ref{lemma:extensions from antecedent}. 
If $u = 2 \sigma(v)$, then the extension set of $u$ is equivalent to the one obtained from $E(v)$ by merging the columns with labels 1 and 2. 
If $u = 2 \sigma(v)1$, then the extension set of $u$ is equivalent to the one obtained from $E(v)$ by deleting the column with label 3.
In both cases, $u$ is ordinary.

The same reasoning applies when $\tau_0 \in \{c_{121},c_{212}\}$: depending on the word $x$ such that $u \in A^* \sigma(v)x$, either we delete the column with label 3, or we merge the columns with labels 1 and 2.   
\end{proof}

Recall that an infinite word is a {\em tree word} if the extension graph of any  of its bispecial factors is a tree.
Obviously, if $u$ is an ordinary bispecial word, its extension graph is a tree.
If $\bw \in \mathcal{A}^\N$ is a tree word in which all letters of $\mathcal{A}$ occur, then $\bw$ has factor complexity $p(n) = (\card(\mathcal{A})-1)n+1$ for all $n$~\cite{MR3320917}.

\begin{proposition}
\label{prop: cassaigne are tree}
Any primitive $\mathcal{C}$-adic word is a uniformly recurrent tree word. 
In particular, any primitive $\mathcal{C}$-adic word has factor complexity $p(n)=2n+1$.
\end{proposition}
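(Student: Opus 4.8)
The plan is to separate the two assertions: uniform recurrence, which is immediate, and the tree-word property, which carries the real content; the value $p(n)=2n+1$ then follows at once. Since a primitive $\mathcal{C}$-adic word has a primitive $S$-adic representation, it is uniformly recurrent by the observation made when $S$-adic words were introduced, and in particular all three letters occur. By the characterisation of tree words recalled just before the statement (from~\cite{MR3320917}), it therefore suffices to prove that the extension graph of every bispecial factor of $\bw$ is a tree; the value $p(n)=2n+1$ is then the case $\card(\mathcal{A})=3$ of that characterisation. So the whole proof reduces to controlling the extension sets of bispecial factors.

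First I would use Lemma~\ref{lemma:synchronization} to organise the bispecial factors: every non-empty bispecial factor $u$ of $\bw$ has a strictly shorter bispecial antecedent, so $u$ sits at the top of a finite descendant chain $u=u_0,u_1,\dots,u_n=\varepsilon$, where each $u_{i+1}\in\fac(\bw^{(i+1)})$ is the bispecial antecedent of $u_i$ under $\tau_i$. Reading the chain from the bottom, $u$ is obtained from the empty word by successively taking bispecial extended images under $\tau_{n-1},\dots,\tau_0$. This turns the problem into an induction whose base case is $\varepsilon$ and whose inductive step is a single application of Lemma~\ref{lemma:extensions from antecedent}.

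The heart of the argument is to propagate a finite list $\mathcal{L}$ of admissible extension-set shapes along such a chain, every member of $\mathcal{L}$ being, up to permutation of the rows and of the columns, the extension set of a tree. Concretely, $\mathcal{L}$ would consist of the ordinary shapes together with the two \emph{exceptional} families appearing in the hypotheses of items~\ref{item:E(v) C12} and~\ref{item:E(v) L23} of Lemma~\ref{lemma:ordinary preserved} and the few transient shapes of $\varepsilon$ itself. For the base case, Lemma~\ref{lemma: empty word} exhibits the six possible extension sets of $\varepsilon$, and one checks directly that each is a tree and lies in $\mathcal{L}$, the sets for $\tau_0\in\{c_{11},c_{22}\}$ being ordinary. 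For the inductive step, Lemma~\ref{lemma:ordinary preserved} does most of the work: item~\ref{item:ordinary preserved} shows that ordinariness is \emph{absorbing}, so once a shape in the chain becomes ordinary every later image stays ordinary; item~\ref{item:E(v)=E(u)} shows that $c_{11}$ and $c_{22}$ leave the shape unchanged; and items~\ref{item:v=epsilon}, \ref{item:E(v) C12} and~\ref{item:E(v) L23} turn the exceptional shapes into ordinary ones under the remaining substitutions. The point is that the non-ordinary members of $\mathcal{L}$ are only ever reproduced (by $c_{11},c_{22}$) or collapse to an ordinary shape, so $\mathcal{L}$ is closed under the propagation and no bispecial factor can acquire a non-tree extension graph.

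The main obstacle is exactly this closure verification. Lemma~\ref{lemma:ordinary preserved} is not stated for every pair of shape and substitution; for instance it does not directly cover applying $c_{211}$ to a column-exceptional shape, applying $c_{122}$ to a row-exceptional shape, or applying a substitution to the transient shapes coming from $c_{121}$ and $c_{212}$. These finitely many remaining transitions have to be computed by hand using the tabular recipe following Lemma~\ref{lemma:extensions from antecedent}. I expect each such computation to land back in $\mathcal{L}$, giving either an ordinary shape or an exceptional shape of the opposite handedness; but assembling them into an airtight finite case analysis, and in particular checking that the auxiliary condition that $E(v)$ with its exceptional entry removed be ordinary is preserved, is the delicate part. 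Everything else is bookkeeping.
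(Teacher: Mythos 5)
Your proposal follows the paper's proof essentially step for step: uniform recurrence from primitivity, reduction to the tree property via the cited complexity result, the descendant chain from $\varepsilon$ given by Lemma~\ref{lemma:synchronization}, and the propagation of a finite closed family of tree-shaped extension sets using Lemmas~\ref{lemma: empty word}, \ref{lemma:extensions from antecedent} and~\ref{lemma:ordinary preserved}. The only content you leave unexecuted is the finite closure check, which is exactly what the paper supplies in Figures~\ref{figure:descendants of c_{122}} and~\ref{figure:descendants of c_{121}}. One warning about that check: your expectation that the non-ordinary shapes are ``only ever reproduced (by $c_{11},c_{22}$) or collapse to an ordinary shape'' is not what the computation shows --- the figures exhibit two-cycles of non-ordinary shapes under $c_{211}\cdot 2$ and $3\cdot c_{211}\cdot 2$ (resp.\ $2\cdot c_{122}$ and $2\cdot c_{122}\cdot 1$) that never become ordinary, and the image of a column-exceptional shape under $c_{211}$ is again column-exceptional rather than of ``opposite handedness''; the argument still closes because each of these persistent shapes is itself a tree, which is precisely the observation on which the paper's proof ends.
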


\begin{proof}
Any primitive $\mathcal{C}$-adic word has a primitive $\mathcal{C}$-adic representation, hence is uniformly recurrent.

To show that the extension graphs of all bispecial factors are trees, we make use of Lemma~\ref{lemma:ordinary preserved}.
If $u$ is a bispecial factor of $\bw$, it is a descendant of $\varepsilon \in \fac(\bw^{(n)})$ for some $n$.
If $\tau_n \in \{c_{11},c_{22}\}$, then from Lemma~\ref{lemma: empty word} and Lemma~\ref{lemma:ordinary preserved}, all descendants of $\varepsilon$ are ordinary. 
The extension graph of $u$ is thus a tree.

For $\tau_n \in\{c_{122},c_{211},c_{121},c_{212}\}$, we represent the extension sets of the descendants of $\varepsilon$ in the graphs represented in Figure~\ref{figure:descendants of c_{122}} and Figure~\ref{figure:descendants of c_{121}}.
Observe that the situation is symmetric for $c_{122}$ and $c_{211}$ and for $c_{121}$ and $c_{212}$ so we only represent the graphs for $c_{122}$ and $c_{121}$.
Furthermore, in these graphs, we do not represent the extension sets of ordinary bispecial factors as the property of being ordinary is preserved by taking bispecial extended images (Lemma~\ref{lemma:ordinary preserved}).
Given an extension set of some bispecial word $v$, if $u$ is a bispecial extended image of $v$ such that $u = x \tau(v) y$, we label the edge from $E(v)$ to $E(u)$ by $x \cdot \tau \cdot y$.
Finally, for all $v$, we have $E(c_{11}(v)1)=E(v)$ and $E(3c_{22}(v))=E(v)$, but for the sake of clarity, we do not draw the loops labeled by $c_{11}\cdot 1$ and by $3 \cdot c_{22}$.
We conclude the proof by observing that the extension graphs of all descendants are trees. 
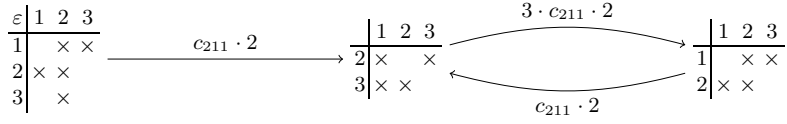
\begin{figure}[t]
\centering
\scalebox{.9}{
\begin{tikzpicture}
\node(A) at (0,0){
$\begin{array}{c|ccc}
\varepsilon & 1 & 2 & 3 \\
 \hline
 1 &   		& \times & \times \\
 2 & \times & \times &   \\
 3 &   		& \times &  
\end{array}$
};
\node(B) at (5,0){
$\begin{array}{c|ccc}
 & 1 & 2 & 3 \\
 \hline
 2 & \times & 		 & \times  \\
 3 & \times	& \times &  
\end{array}$
};
\node(C) at (10,0){
$\begin{array}{c|ccc}
 & 1 & 2 & 3 \\
 \hline
 1 & 		& \times & \times  \\
 2 & \times	& \times &  
\end{array}$
};
\path [->] (A) edge node[above] {$c_{211}\cdot 2$} (B);
\path [->] (B) edge[bend left=15] node[above] {$3\cdot c_{211}\cdot 2$} (C);
\path [->] (C) edge[bend left=15] node[below] {$c_{211}\cdot 2$} (B);
\end{tikzpicture}
}
\caption{Non-ordinary bispecial descendants of $\varepsilon \in \fac(\bw^{(n)})$ whenever $\tau_n=c_{122}$.}
\label{figure:descendants of c_{122}}
\end{figure}
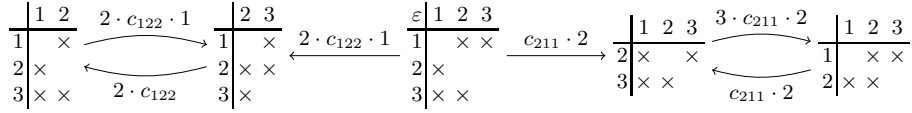
\begin{figure}[t]
\centering
\scalebox{.9}{
\begin{tikzpicture}
\node(A) at (6,0){
$\begin{array}{c|ccc}
\varepsilon & 1 & 2 & 3 \\
 \hline
 1 &   		& \times 	& \times 	\\
 2 & \times &   		&   		\\
 3 & \times & \times 	&
\end{array}$
};
\node(B) at (3,0){
$\begin{array}{c|cc}
 & 2 & 3 \\
 \hline
 1 & 		& \times  \\
 2 & \times & \times  \\
 3 & \times	&   
\end{array}$
};
\node(B') at (0,0){
$\begin{array}{c|cc}
 & 1 & 2 \\
 \hline
 1 & 		& \times  \\
 2 & \times & 		  \\
 3 & \times	& \times  
\end{array}$
};
\node(C) at (9,0){
$\begin{array}{c|ccc}
 & 1 & 2 & 3 \\
 \hline
 2 & \times &		 & \times  \\
 3 & \times	& \times &  
\end{array}$
};
\node(C') at (12,0){
$\begin{array}{c|ccc}
 & 1 & 2 & 3 \\
 \hline
 1 & 		& \times & \times  \\
 2 & \times	& \times &  
\end{array}$
};
\path [->] (A) edge node[above] {$2\cdot c_{122}\cdot 1$} (B);
\path [->] (B) edge[bend left=15] node[below] {$2\cdot c_{122}$} (B');
\path [->] (B') edge[bend left=15] node[above] {$2 \cdot c_{122} \cdot 1$} (B);
\path [->] (A) edge node[above] {$c_{211} \cdot 2$} (C);
\path [->] (C) edge[bend left=15] node[above] {$3 \cdot c_{211} \cdot 2$} (C');
\path [->] (C') edge[bend left=15] node[below] {$c_{211} \cdot 2$} (C);
\end{tikzpicture}
}
\caption{Non-ordinary bispecial descendants of $\varepsilon \in \fac(\bw^{(n)})$ whenever $\tau_n=c_{121}$.}
\label{figure:descendants of c_{121}}
\end{figure}

\end{proof}

\section{Selmer Algorithm}

Selmer algorithm \cite{schweiger} (also called \emph{the GMA algorithm} in
\cite{MR1156412}) is an algorithm which subtracts the smallest entry to the
largest. Here we introduce a semi-sorted version of it which keeps the largest
entry at index $1$. On $\Gamma=\{\bx=(x_1,x_2,x_3)\in\mathbb{R}^3_{\geq 0}\mid
\max(x_2,x_3)\leq x_1\leq x_2+x_3\}$, it is defined as
\[
    F_S (x_1,x_2,x_3) = 
\begin{cases}
    (x_2, x_1-x_3, x_3) & \mbox{if } x_2 \geq x_3,\\
    (x_3, x_2, x_1-x_2) & \mbox{if } x_2 < x_3.
\end{cases}
\]
The partition of $\Gamma$ into $\Gamma_1\cup\Gamma_2$ is
\begin{align*}
	\Gamma_1 &= \{(x_1,x_2,x_3)\in\Gamma\mid 
	x_2 \geq x_3\}, \\
    \Gamma_2 &= \{(x_1,x_2,x_3)\in\Gamma\mid 
	x_2 < x_3\}.
\end{align*}
For semi-sorted Selmer algorithm, the matrices and associated
substitutions are
\[
S_1 = \left(\begin{array}{rrr}
    0 & 1 & 1 \\
    1 & 0 & 0 \\
    0 & 0 & 1
\end{array}\right),
\quad
S_2 = \left(\begin{array}{rrr}
    0 & 1 & 1 \\
    0 & 1 & 0 \\
    1 & 0 & 0
\end{array}\right)
\quad\text{and}\quad
s_1=\left\{\begin{array}{l}
1 \mapsto 2\\
2 \mapsto 1\\
3 \mapsto 31
\end{array}\right.,
\quad
s_2=\left\{\begin{array}{l}
1 \mapsto 3\\
2 \mapsto 12\\
3 \mapsto 1
\end{array}\right..
\]
The matrices are given by the rule
$M(\bx) = S_i$
if and only if
$\bx\in\Gamma_i$.
The map $F_S$ on $\Gamma$ 
is then defined as:
$
    F_S(\bx) = M(\bx)^{-1}\bx$.
The substitutions on $\A^*$ are given by the rule
$\sigma(\bx) = s_i$
if and only if
$\bx\in\Gamma_i$ for $i=1,2$.

\section{Conjugacy of $F_C$ and $F_S$}

The numerical computation of Lyapunov exponents made in \cite{labbe_3-dimensional_2015}
indicate that exponents for the unsorted Selmer algorithm and $F_C$ have
statistically equal values. The next proposition gives an explanation for this observation.

\begin{proposition}
    Algorithms $F_C:\Lambda\to\Lambda$ and $F_S:\Gamma\to\Gamma$ are topologically conjugate.
\end{proposition}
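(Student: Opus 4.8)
The plan is to produce an explicit \emph{linear} conjugacy. Since both maps are defined branchwise by $F_C(\bx)=C_i^{-1}\bx$ on $\Lambda_i$ and $F_S(\by)=S_i^{-1}\by$ on $\Gamma_i$, a homeomorphism of the form $h(\bx)=A\bx$ with $A\in GL(3,\R)$ will conjugate them as soon as (a) $h$ maps $\Lambda$ onto $\Gamma$, (b) $h$ sends each $\Lambda_i$ onto the matching $\Gamma_i$, and (c) the intertwining $h\circ F_C=F_S\circ h$ holds. Crucially, (c) restricted to a branch is equivalent to the purely algebraic identity $S_jA=AC_i$ between integer matrices, so the whole statement reduces to linear algebra once the branch correspondence is fixed.

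First I would determine $A$. Imposing $A C_i^{-1}\bx=S_j^{-1}A\bx$ on $\Lambda_i$ is equivalent to $S_jA=AC_i$, and solving the two systems $S_1A=AC_1$ and $S_2A=AC_2$ simultaneously (guessing the natural correspondence $1\mapsto1$, $2\mapsto2$) yields a one-dimensional solution space; up to a positive scalar one gets
\[
A=\begin{pmatrix}1&1&1\\1&1&0\\0&1&1\end{pmatrix},
\]
so that $A\bx=(x_1+x_2+x_3,\,x_1+x_2,\,x_2+x_3)$ and $\det A=1$. In particular $h$ is a linear bijection of $\R^3$, hence a homeomorphism, and one records $S_iA=AC_i$ (equivalently $AC_iA^{-1}=S_i$) for $i=1,2$ as a direct check.

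Next I would verify the geometry. Writing $\by=A\bx$, one computes $y_1-y_2=x_3$, $y_1-y_3=x_1$ and $y_2+y_3-y_1=x_2$, so that $\bx$ having all entries nonnegative is exactly equivalent to $\max(y_2,y_3)\le y_1\le y_2+y_3$; thus $h(\Lambda)=\Gamma$ (surjectivity being read off by solving for $\bx$ in terms of $\by$). Moreover $y_2-y_3=x_1-x_3$, so $\{x_1\ge x_3\}$ corresponds to $\{y_2\ge y_3\}$, i.e. $h$ carries $\Lambda_1$ onto $\Gamma_1$ and $\Lambda_2$ onto $\Gamma_2$, boundary included. This branch-matching is the linchpin that aligns the two algorithms.

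Finally I would assemble the pieces: on $\Lambda_i$ we have $F_C=C_i^{-1}$, and because $h$ sends $\Lambda_i$ into $\Gamma_i$ the correct branch of $F_S$ is selected, giving $F_S\circ h=S_i^{-1}A$ there; the identity $S_iA=AC_i$ then yields $AC_i^{-1}=S_i^{-1}A$, whence $h(F_C(\bx))=F_S(h(\bx))$ for every $\bx\in\Lambda$. As $h$ is a homeomorphism, this is precisely topological conjugacy. The only genuinely creative step is discovering $A$ (and the correct branch correspondence); everything else is a short verification. The point to watch is exactly step three: the matrix identities $S_iA=AC_i$ hold globally on $\R^3$, but conjugating the \emph{piecewise} maps requires that $h$ map $\Lambda_i$ to $\Gamma_i$ so that the matched matrix is used on each branch—otherwise the two branches would be crossed and the intertwining would fail.
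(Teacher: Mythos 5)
Your proposal is correct and follows essentially the same route as the paper: the conjugating matrix $A$ is exactly the paper's $Z$, and the key identities $S_iZ=ZC_i$ are the ones the paper records. You are in fact more thorough than the paper, which omits the verification that $Z$ maps $\Lambda$ onto $\Gamma$ and $\Lambda_i$ onto $\Gamma_i$ (so that the matching branches are selected); your explicit computation of $y_1-y_2$, $y_1-y_3$, $y_2+y_3-y_1$ and $y_2-y_3$ supplies precisely that missing detail.
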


\begin{proof}
Let $z:\Lambda\to\Gamma$ be the homeomorphism defined by $\bx\mapsto Z\bx$ with
\[
Z= \left(\begin{array}{rrr}
    1 & 1 & 1 \\
    1 & 1 & 0 \\
    0 & 1 & 1
\end{array}\right).
\]
We verify that $C_i$ is conjugate to $S_i$ through matrix $Z$ for $i=1,2$:
\[
S_1 Z=
\left(\begin{array}{rrr}
1 & 2 & 1 \\
1 & 1 & 1 \\
0 & 1 & 1
\end{array}\right)
= Z C_1
\qquad\text{and}\qquad
S_2 Z=
\left(\begin{array}{rrr}
    1 & 2 & 1 \\
    1 & 1 & 0 \\
    1 & 1 & 1
\end{array}\right)
= Z C_2.
\]
Thus we have $z\circ F_C = F_S\circ z$.
\end{proof}

An infinite word $u\in A^\N$ is said to be \emph{finitely balanced} if there
exists a constant $C>0$ such that for any pair $v$, $w$ of factors of the same
length of $u$, and for any letter $i\in A$, $||v|_i-|w|_i|\leq C$.

Based on \cite[Theorem 6.4]{MR3330561}, and considering that
computer experiments suggest that the second Lyapunov exponent of Selmer algorithm
is negative ($\theta_1\approx\log(1.200)\approx 0.182$ and
$\theta_2\approx\log(0.9318)\approx -0.0706$ in \cite[p. 1522]{MR1156412},
$\theta_1\approx 0.18269$ and $\theta_2\approx -0.07072$ in
\cite{labbe_3-dimensional_2015}), we believe that the following conjecture
holds.

\begin{conjecture}
For almost every $\bx\in\Delta$, the word $W(\bx)$ is finitely balanced.
\end{conjecture}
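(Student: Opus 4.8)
The plan is to derive finite balance from the strict negativity of the second Lyapunov exponent of the cocycle, following the strategy of \cite[Theorem 6.4]{MR3330561}. That result asserts, roughly, that for an $S$-adic system driven by a continued fraction algorithm whose cocycle satisfies the Oseledets hypotheses, if the top Lyapunov exponent $\theta_1$ is strictly positive and the second exponent $\theta_2$ is strictly negative, then the $S$-adic word attached to almost every directive sequence is finitely balanced. Thus the whole matter reduces to (a) checking the standing hypotheses of that theorem in our setting and (b) establishing $\theta_2 < 0$.

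First I would fix the ergodic framework. The map $f_C$ on $\Delta$ admits an invariant measure whose density was computed in \cite{arnoux_labbe_2017}; I would record that this measure is ergodic and equivalent to Lebesgue, so that ``almost every $\bx$'' in the statement is unambiguous and that, by the first lemma, almost every $\bx$ has rationally independent entries and hence a primitive directive sequence (to which Proposition~\ref{prop: cassaigne are tree} applies). The cocycle $M_n(\bx)$ is log-integrable for this measure, the two matrices $C_1,C_2$ being fixed and invertible over $\Z$, so the Oseledets theorem applies and the exponents $\theta_1\geq\theta_2\geq\theta_3$ are well defined.

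The conjugacy established just above is what lets me import the structural and numerical information known for Selmer's algorithm. Since $z\circ F_C=F_S\circ z$ with the fixed invertible integer matrix $Z$, the two cocycles differ only by the constant coboundary $Z$, and conjugation by a fixed matrix leaves Lyapunov exponents unchanged; hence $F_C$ and $F_S$ share the spectrum $\{\theta_1,\theta_2,\theta_3\}$. In particular the values reported for Selmer, $\theta_1\approx 0.18269$ and $\theta_2\approx -0.07072$, transfer verbatim, and the verification of the hypotheses of \cite[Theorem 6.4]{MR3330561} may be carried out on whichever of the two algorithms is more convenient.

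The hard part will be turning the numerical sign of $\theta_2$ into a theorem, namely proving rigorously that $\theta_2 < 0$. Establishing strict negativity of a second Lyapunov exponent for a multidimensional continued fraction algorithm is notoriously delicate: there is no closed form, and the usual positivity and convergence arguments only control $\theta_1$. One promising route is to bound $\theta_2$ from above by exhibiting, on a set of positive measure, a uniform contraction of the cocycle restricted to the invariant plane transverse to the Perron eigendirection---for instance by finding a finite family of products of the $C_i$ whose action on that plane is a strict contraction in a suitable Hilbert metric, and then invoking a Furstenberg-type argument to conclude that the integrated contraction rate is strictly negative. Combining such an estimate with the ergodicity from the first step and the transfer theorem would close the gap; absent a rigorous contraction estimate the statement must remain, as here, a conjecture.
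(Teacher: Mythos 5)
This statement is labelled a \emph{conjecture} in the paper, and the paper offers no proof of it: the authors only record the heuristic justification (the appeal to \cite[Theorem 6.4]{MR3330561} together with the numerically observed negativity of the second Lyapunov exponent of the Selmer algorithm, transferred to $F_C$ via the conjugacy by $Z$). Your proposal reconstructs exactly that intended route --- ergodic invariant measure equivalent to Lebesgue, Oseledets applicability, invariance of the Lyapunov spectrum under conjugation by the fixed matrix $Z$, and the balance criterion $\theta_2<0$ --- so as a description of the strategy it is faithful to what the authors have in mind.

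However, it is not a proof, and you say so yourself in your last sentence: the decisive step, a rigorous demonstration that $\theta_2<0$, is left entirely open. Everything before that point is routine bookkeeping; the entire mathematical content of the statement is concentrated in that one inequality, for which you offer only a sketch of a possible attack (a uniform contraction on the transverse invariant plane in a Hilbert metric over a positive-measure set of finite products of the $C_i$) without carrying it out or even identifying a candidate family of products. Since that is precisely the obstacle that forces the authors to state the result as a conjecture rather than a theorem, your submission should be regarded as an accurate account of why the conjecture is believed, not as a proof of it. If you want to make progress beyond the paper, the place to work is a certified (e.g.\ interval-arithmetic or cone-contraction) upper bound on $\theta_2$ for the Selmer cocycle with respect to its absolutely continuous invariant measure; nothing else in your outline presents any difficulty.
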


\subsection{Substitutive Conjugacy}
Let $z_l$ and $z_r$ be the following two substitutions:
\[
z_l :\left\{\begin{array}{l}
1 \mapsto 12\\
2 \mapsto 123\\
3 \mapsto 13
\end{array}\right.
\qquad\text{and}\qquad
z_r :\left\{\begin{array}{l}
1 \mapsto 21\\
2 \mapsto 231\\
3 \mapsto 31
\end{array}\right..
\]
The substitution $z_l$ is left proper while $z_r$ is right proper.
Moreover they are conjugate through the equation
\[
    z_l(w)\cdot 1 = 1\cdot z_r(w) \qquad\text{for every } w\in\mathcal{A}^*.
\]
Notice that $Z$ is the incidence matrix of both $z_l$ and $z_r$.


The substitutions $c_i$ are not conjugate to $s_i$ but are related through
substitutions $z_l$ and $z_r$ for $i=1,2$:
\begin{align*}
s_1\circ z_l = z_r\circ c_1 = (1 \mapsto 21, 2 \mapsto 2131, 3 \mapsto 231),\\
s_2\circ z_r = z_l\circ c_2 = (1 \mapsto 123, 2 \mapsto 1213, 3 \mapsto 13).
\end{align*}

We deduce that
\begin{proposition}
$S$-adic sequences when $S=\{s_1,s_2\}$ restricted to the application of
the semi-sorted Selmer algorithm $F_S$ on totally irrational vectors
$\bx\in\Gamma$ have factor complexity $2n+1$.
\end{proposition}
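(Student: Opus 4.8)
The plan is to transport everything to the Cassaigne setting, where factor complexity $2n+1$ has already been established, using the topological conjugacy $z$ together with the two morphism relations. Fix a totally irrational $\bx\in\Gamma$ (i.e.\ with rationally independent entries) and put $\bx'=Z^{-1}\bx$. The homeomorphism $z:\Lambda\to\Gamma$ respects the partitions, $z(\Lambda_i)=\Gamma_i$, which is immediate on the defining inequalities (for $\mathbf y=Z\bx$ one has $y_2-y_3=x_1-x_3$), so the conjugacy $z\circ F_C=F_S\circ z$ yields $F_S^n(\bx)\in\Gamma_i$ if and only if $F_C^n(\bx')\in\Lambda_i$ for every $n$. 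Hence the index sequence $(i_n)$ produced by $F_S$ on $\bx$ coincides with the one produced by $F_C$ on $\bx'$, so the Selmer directive sequence $(s_{i_n})$ and the Cassaigne directive sequence $(c_{i_n})$ carry the same indices. Since $Z\in SL(3,\Z)$, the entries of $\bx'$ are rationally independent exactly when those of $\bx$ are; by the lemma relating rational independence to primitivity the sequence $(c_{i_n})$ is then primitive, and by Proposition~\ref{prop: cassaigne are tree} the $\mathcal{C}$-adic word $W(\bx')$ is a uniformly recurrent tree word of complexity $2n+1$.

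Next I would show that the $\{s_1,s_2\}$-adic word $V(\bx)$ determined by $(s_{i_n})$ generates the same subshift as $z_r(W(\bx'))$ (equivalently $z_l(W(\bx'))$). The engine is the pair of relations $s_1 z_l=z_r c_1$ and $s_2 z_r=z_l c_2$: writing $W(\bx')=c_{i_0}\big(W(F_C\bx')\big)$ and applying $z_r$, one pulls an $s_{i_0}$ to the front when $i_0=1$, leaving a tail $z_l(W(F_C\bx'))$ one level deeper; iterating, $s_2$ is extracted from $z_l c_2$ and $s_1$ from $z_r c_1$, so each letter of the index sequence is converted in turn. The only obstruction is a repeated index $c_ic_i$, where the incoming $z$-type does not match either relation; there I would invoke the conjugacy $z_l(w)\,1=1\,z_r(w)$, which shows that $z_l\tau$ and $z_r\tau$ are conjugate morphisms for every $\tau$ (since $(z_l\tau)(w)\,1=1\,(z_r\tau)(w)$), and hence allows one to swap $z_l\leftrightarrow z_r$ at the cost of a shift. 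As a shift changes neither the language nor the complexity, iterating this identification gives $X_{V(\bx)}=X_{z_r(W(\bx'))}$; uniform recurrence of $V(\bx)$ follows from that of $W(\bx')$ because $z_r$ is non-erasing.

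It then remains to prove that $z_r$ carries the complexity $2n+1$ of $W(\bx')$ over to $z_r(W(\bx'))$, and this is the step I expect to be the main obstacle, since arbitrary morphic images need not preserve complexity. The favourable feature is that $z_r$ is right proper and, more precisely, the letter $1$ occurs exactly once in each of $z_r(1)=21$, $z_r(2)=231$, $z_r(3)=31$, always in final position; thus the occurrences of $1$ in $z_r(W(\bx'))$ are unambiguous block markers, making $z_r$ recognizable with a clean desubstitution. I would exploit this marker structure to set up a complexity-preserving correspondence between the bispecial factors of $W(\bx')$ and those of $z_r(W(\bx'))$, transferring the extension-graph analysis of Section~3 across $z_r$ in the same spirit as Lemma~\ref{lemma:extensions from antecedent} transfers extension sets across the $\mathcal{C}'$ morphisms, and thereby checking that every bispecial factor of $z_r(W(\bx'))$ still has a tree extension graph. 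Since all three letters occur, the cited result~\cite{MR3320917} forces the factor complexity of this tree word to be $2n+1$. Finally, because $V(\bx)$ and $z_r(W(\bx'))$ share the same language, $V(\bx)$ has factor complexity $2n+1$, which is exactly the assertion of the proposition.
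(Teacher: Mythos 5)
Your architecture is exactly the deduction the paper has in mind: the paper offers no argument beyond the intertwining identities $s_1z_l=z_rc_1$ and $s_2z_r=z_lc_2$, and your first two paragraphs supply correctly the details it omits. In particular, the verification that $Z$ maps $\Lambda_i$ onto $\Gamma_i$ (via $y_2-y_3=x_1-x_3$), the matching of the index sequences, the transfer of rational independence through $Z\in SL(3,\Z)$, and the handling of repeated indices $c_ic_i$ via the conjugacy $z_l(w)1=1z_r(w)$ (where the naive telescoping of the two identities breaks down, since they only chain directly on alternating indices) are all sound, and the language identification $\fac(V(\bx))=\fac(z_r(W(\bx')))$ follows because the accumulated conjugating words only perturb factors near the boundary of images of long words, which uniform recurrence absorbs.

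The one place where your text is a plan rather than a proof is the final step, and you are right to flag it as the crux: that $z_r$ carries the complexity $2n+1$ of the tree word $W(\bx')$ to its image. Your marker observation is correct and is the right tool ($1$ occurs exactly once, in final position, in each of $21$, $231$, $31$, so desubstitution is unambiguous and every sufficiently long bispecial factor of $z_r(W(\bx'))$ synchronizes as $s\,z_r(v)\,p$ with $v$ bispecial in $W(\bx')$). But be aware that the resulting transfer of extension sets, in the style of Lemma~\ref{lemma:extensions from antecedent}, involves merging rows or columns (e.g.\ with $s=1$ the left extensions coming from $a'=2$ and $a'=3$ both read the letter $3$), and identifying two vertices of a tree does \emph{not} in general yield a tree --- this is precisely why Proposition~\ref{prop: cassaigne are tree} needs the explicit case analysis of Figures~\ref{figure:descendants of c_{122}} and~\ref{figure:descendants of c_{121}} rather than a blanket ``operations preserve trees'' claim. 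So closing your argument requires an analogous finite enumeration: list the extension sets actually realized by bispecial factors of $W(\bx')$ (the ordinary ones plus those in the two figures), apply the $z_r$-transfer to each for each admissible pair $(s,p)$, and check that every resulting graph is a tree; the short unsynchronized bispecial factors, starting with $\varepsilon$ (whose extension graph is the path on six vertices with edges $\{12,13,21,23,31\}$), must be checked by hand. This is finite, mechanical, and does go through, but as written your proposal asserts rather than performs it.
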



The problem of finding an analogue of $F_C$ in dimension $d\geq 4$ (i.e.
projective dimension $d-1$), generating $S$-adic sequences with complexity
$(d-1)n+1$ for almost every vector of letter frequencies is still open.

\section*{Acknowledgments}

We are thankful to Valérie Berthé for her enthusiasm toward this project and
for the referees for their thorough reading and pertinent suggestions.

\bibliographystyle{splncs03} 
\bibliography{biblio}

\end{document}